\documentclass{amsart}
\usepackage{mathrsfs}
\usepackage{amsfonts}
\usepackage{amsfonts,amssymb,amsmath,amsthm}
\usepackage{url}
\usepackage{enumerate}

\urlstyle{sf}
\newtheorem{thrm}{Theorem}[section]
\newtheorem{lem}[thrm]{Lemma}
\newtheorem{prop}[thrm]{Proposition}
\newtheorem{cor}[thrm]{Corollary}
\theoremstyle{definition}

\newtheorem{remark}[thrm]{Remark}

\newenvironment{proof'}{{\em Proof of Main Theorem.} }{\hfill$\Box$\vspace{0.05in}}

\numberwithin{equation}{section}

\title[MEAN CURVATURE FLOW OF HIGHER CODIMENSION]{\large{Mean Curvature Flow of Higher Codimension in Hyperbolic Spaces }}

\author{Kefeng Liu}
\address{Center of Mathematical Sciences, Zhejiang University,
             Hangzhou,  310027, People¡¯s Republic of China; Department of Mathematics, UCLA, Box 951555, Los Angeles, CA, 90095-1555 }
\email{liu@cms.zju.edu.cn, liu@math.ucla.edu}

\author{Hongwei Xu}

\address{Center of Mathematical Sciences, Zhejiang University,
             Hangzhou,  310027, People¡¯s Republic of China}
\email{xuhw@cms.zju.edu.cn}

\author{Fei Ye}
\address{Center of Mathematical Sciences, Zhejiang University,
             Hangzhou,  310027, People¡¯s Republic of China}
\email{yf@cms.zju.edu.cn}

\author{Entao Zhao}
\address{Center of Mathematical Sciences, Zhejiang University,
             Hangzhou,  310027, People¡¯s Republic of China}
\email{zhaoet@cms.zju.edu.cn}

\thanks{Research supported by the National Natural Science Foundation
of China, Grant No. 11071211; the Trans-Century Training Programme
Foundation for Talents by the Ministry of Education of China, and
the China Postdoctoral Science Foundation, Grant No. 20090461379.}

\keywords{Mean curvature flow; higher codimension; submanifolds;
convergence; second fundamental form}

\subjclass[2000]{53C44, 53C40}
%% NB There should be only one primary classification, and zero or
%more secondary classifications.

\begin{document}

\begin{abstract} In this paper we investigate the convergence for the mean curvature flow of closed submanifolds
with arbitrary codimension in space forms.  Particularly, we prove
that the mean curvature flow deforms a closed submanifold satisfying
a pinching condition in a hyperbolic space form to a round point in
finite time.
\end{abstract}
\maketitle

\section{Introduction}
In this paper, we study the convergence of the mean curvature flow
of submanifolds in space forms. Let $F:M^{n}\rightarrow
\mathbb{F}^{n+d}(c)$ be a smooth immersion from an $n$-dimensional
closed Riemannian manifold $M^{n}$ to an $(n+d)$-dimensional
complete simply connected space form $\mathbb{F}^{n+d}(c)$ with
constant sectional curvature $c$. Consider a one-parameter family of
smooth immersions $F:M\times [0,T)\rightarrow \mathbb{F}^{n+d}(c)$
satisfying
\begin{eqnarray}
\label{MCF}\left\{
\begin{array}{ll}
\frac{\partial}{\partial t}F(x,t)&=\ H(x,t),\\
F(x,0)&=\ F(x),
\end{array}\right.
\end{eqnarray}
where  $H(x,t)$ is the mean curvature vector of $F_t(M)$ and
$F_t(x)=F(x,t)$. We call $F:M\times [0,T)\rightarrow
\mathbb{F}^{n+d}(c)$ the mean curvature flow with initial value $F$.

The mean curvature flow was proposed  by Mullins \cite{Mullins56} to
describe the formation of grain boundaries in annealing metals. In
\cite{B}, Brakke introduced the motion of a submanifold by its mean
curvature in arbitrary codimension and constructed a generalized
varifold solution for all time. For the classical solution of the
mean curvature flow, most works have been done on hypersurfaces.
Huisken \cite{H1,H2} showed that if the initial hypersurface in a
Riemannian manifold is uniformly convex,  then the mean curvature
flow converges to a round point in finite time. Later, Huisken
\cite{H3} extend this result to hypersurfaces satisfying a pinching
condition in a sphere. Many other beautiful results have been
obtained, and there are various approaches to study the mean
curvature flow of hypersurfaces (see \cite{CGG,ES}, etc.). For the
mean curvature flow of submanifolds in higher codimension, some
special cases have been studied,  see
\cite{Sm,Sm2,SW,WaM1,WaM2,WaM3} etc. for example. Recently,
Andrews-Baker \cite{Andrews-Baker} proved a convergence theorem for
the mean curvature flow of closed submanifolds satisfying a pinching
condition in the Euclidean space. In \cite{Baker}, Baker proved a
convergence result for the mean curvature flow of submanifolds in a
sphere. In this paper, we study the mean curvature flow of closed
submanifolds in hyperbolic spaces and extend the convergence result
in \cite{Andrews-Baker,Baker} to the mean curvature flow of
arbitrary codimension in space forms.
\begin{thrm}\label{convergence-c}
Let $F:M^{n}\rightarrow \mathbb{F}^{n+d}(c)$ be a smooth closed
submanifold in a hyperbolic space with constant curvature $c<0$.
Assume $F$ satisfies
\begin{eqnarray}
\label{pinch-cond}
|A|^2\leq\begin{cases}
            \frac{4}{3n}|H|^2+\frac{n}{2}c, \ &n = 2, 3, \\
            \frac{1}{n-1}|H|^2+2c, \ &n \geq 4.
        \end{cases}
\end{eqnarray}
Then the mean curvature flow with $F$ as initial value converges to
a round point in finite time.
\end{thrm}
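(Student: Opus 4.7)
The plan is to adapt the strategy of Andrews--Baker \cite{Andrews-Baker} (Euclidean ambient) and Baker \cite{Baker} (spherical ambient) to the hyperbolic setting. Write $A$ for the second fundamental form, $H = \mathrm{tr}_g A$ for the mean curvature vector, $\mathring{A} = A - \tfrac{1}{n}g\otimes H$ for its traceless part, and define the pinching function
\[
Q \;=\; |A|^2 - a|H|^2 - bc,
\]
where $(a,b) = (4/(3n),\,n/2)$ for $n=2,3$ and $(a,b) = (1/(n-1),\,2)$ for $n\ge 4$. Since $c<0$, the constant $-bc$ is positive and \eqref{pinch-cond} reads $Q\le 0$ on $M$. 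The proof will proceed in four steps: preservation of $Q\le 0$, a pinching improvement, a gradient estimate, and identification of the singular model as a round sphere after rescaling.

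The first task is preservation of the pinching. Using Simons' identity in $\mathbb{F}^{n+d}(c)$, one derives evolution equations of the form
\[
\partial_t |A|^2 \;=\; \Delta |A|^2 - 2|\nabla A|^2 + R_1(A) + c\, L_1(A,H),
\]
\[
\partial_t |H|^2 \;=\; \Delta |H|^2 - 2|\nabla H|^2 + R_2(A,H) + c\, L_2(H),
\]
in which $R_1,R_2$ are the quartic reaction terms familiar from \cite{Andrews-Baker} and $L_1,L_2$ are linear combinations of $|A|^2,|H|^2$ encoding the ambient curvature. Applying the parabolic maximum principle to $Q$ reduces preservation to an algebraic inequality at points where $Q=0$: the terms $R_1 - a R_2$ are controlled exactly as in \cite{Andrews-Baker}, while the hyperbolic correction $c(L_1 - a L_2)$ has the correct sign precisely when $b$ is chosen as in \eqref{pinch-cond}. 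This is the reason for the specific constants $n/2$ and $2$ appearing in the theorem.

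Next I would prove an improved pinching: for every $\sigma > 0$ there exists $C_\sigma$ with
\[
|A|^2 - \tfrac{1}{n}|H|^2 \;\le\; C_\sigma\,\bigl(|H|^2 - nc\bigr)^{1-\sigma/2} \quad \text{on } M\times[0,T).
\]
Following \cite{Andrews-Baker,Baker}, the plan is to study the evolution of the truncated quantity $f_\sigma = \bigl(|A|^2 - (\tfrac{1}{n}+\delta)|H|^2 + \tfrac{1}{2}bc\bigr)_+ \cdot (|H|^2 - nc)^{(\sigma-2)/2}$ for small $\delta,\sigma > 0$, derive a differential inequality for $\int_{M_t} f_\sigma^p\,d\mu$, and close it by the Michael--Simon Sobolev inequality followed by Stampacchia iteration. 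The main technical obstacle lies here: the ambient term $c<0$ produces reaction contributions with the opposite sign to the sphere case \cite{Baker}, so one must systematically use the shift $|H|^2 - nc$ (which is strictly positive because $c<0$) to keep denominators bounded away from zero, and the bad $c$-terms must be absorbed into the favourable gradient term $-2|\nabla A|^2$ and into the negative-definite term $-\delta|H|^2|\mathring{A}|^2$ produced by the strict pinching.

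With the improved pinching in hand, a maximum-principle argument on $|\nabla A|^2$ divided by a suitable power of $(|H|^2 - nc)$ yields the gradient estimate $|\nabla A|^2 \le \eta |H|^4 + C_\eta$ for every $\eta > 0$. Combined with the improved pinching, this controls $\mathrm{osc}_{M_t}|H|$ by $o(\max_{M_t}|H|)$, so $\max|H|\to\infty$ in finite time $T<\infty$ while $M_t$ stays uniformly close to a totally umbilical submanifold. Rescaling so that $\max|H|^2 \equiv 1$, the hyperbolic curvature becomes negligible at the blow-up scale, and the rescaled flow converges subsequentially (and then, by a standard continuation argument as in \cite{Andrews-Baker,H1}, fully) to a smoothly shrinking round $n$-sphere in $\mathbb{R}^{n+d}$, showing that the original flow contracts to a round point.
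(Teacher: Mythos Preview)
Your outline matches the paper's strategy in its four stages, and the preservation step, gradient estimate, and final rescaling are all handled essentially as the paper does. The substantive gap is in the improved-pinching step, where the hyperbolic sign creates a difficulty you have not fully confronted.

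When $c<0$, the ambient term $-2nc|\mathring A|^2 = 2n|c|\,|\mathring A|^2$ enters the evolution of $f_\sigma$ with the \emph{wrong} sign (the opposite of Baker's spherical case). In the paper this survives as an unabsorbed lower-order term $+2\bar b f_\sigma$ in the pointwise inequality for $\partial_t f_\sigma$, and after integration one obtains only $\partial_t\!\int_{M_t}\! f_\sigma^p\,d\mu_t \le 2\bar b\,p\!\int_{M_t}\! f_\sigma^p\,d\mu_t$, hence $\int f_\sigma^p \le e^{2\bar b p t}\!\int_{M_{t_0}} f_\sigma^p$. You propose to absorb this term into $-2|\nabla A|^2$ and into a putative $-\delta|H|^2|\mathring A|^2$ coming from strict pinching, but neither absorption is actually available: a gradient term cannot dominate a zeroth-order reaction term, and the algebraic reaction estimate (the analogue of the paper's (4.10)) yields only $\le 0$, not a strictly negative multiple of $|H|^2|\mathring A|^2$. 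Incidentally, your denominator $|H|^2-nc=|H|^2+n|c|$ shifts in the opposite direction from the paper's $a|H|^2+\beta_\epsilon c=a|H|^2-\beta_\epsilon|c|$; the \emph{negative} shift is what makes that cancellation go through.

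The paper closes the argument differently: before attempting the $L^p$ bound it proves, by an independent barrier argument on the extrinsic distance $r$ from a fixed point of $\mathbb F^{n+d}(c)$, that $T_{\max}<\infty$. Using $\partial_t r=\Delta r-{\rm co}_c(r)(n-|\partial_r^T|^2)$ and ${\rm co}_c(r)\ge\sqrt{-c}$, the maximum principle gives $r(t)<R-(n-1)\sqrt{-c}\,t$, so $T_{\max}<R/((n-1)\sqrt{-c})$. With $T_{\max}$ bounded \emph{a priori}, the exponential factor $e^{2\bar b p t}$ is harmless, the $L^p$ bound follows, and Stampacchia iteration proceeds as in \cite{H1,HS}. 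In your proposal, finiteness of $T$ appears only at the end as a consequence of the estimates; in the paper it is a crucial \emph{input} to the pinching estimate. That reversal of logical order is the missing idea.
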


As an immediate consequence of Theorem \ref{convergence-c}, we
obtain the following differentiable sphere theorem.

\begin{cor}\label{coro-sphere thm}
Let $F:M^{n}\rightarrow \mathbb{F}^{n+d}(c)$ be a smooth closed
submanifold in a hyperbolic space with constant curvature $c<0$.
Assume $F$ satisfies
\begin{eqnarray*}
|A|^2\leq\begin{cases}
            \frac{4}{3n}|H|^2+\frac{n}{2}c, \ &n = 2, 3, \\
            \frac{1}{n-1}|H|^2+2c, \ &n \geq 4.
        \end{cases}
\end{eqnarray*}
Then $M$ is diffeomorphic to the unit $n$-sphere.
\end{cor}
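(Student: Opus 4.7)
The plan is to derive the corollary as an immediate consequence of Theorem \ref{convergence-c}. First, I would apply that theorem to obtain a smooth solution $F:M\times[0,T)\to\mathbb{F}^{n+d}(c)$ of (\ref{MCF}) that exists on a maximal finite time interval $[0,T)$ and contracts $M$ to a round point as $t\to T$. Observe that under the hypotheses of the corollary the initial immersion is automatically an embedding, since the pinching condition forces $M$ to be, in particular, locally convex in an averaged sense that rules out self-intersections of the limiting rescaled sphere.

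Next, I would unpack the standard meaning of ``convergence to a round point'' supplied by Theorem \ref{convergence-c}: after choosing the contraction point $p\in\mathbb{F}^{n+d}(c)$ and rescaling the flow by a blow-up factor $\lambda(t)\to\infty$ (for instance the inverse of the extrinsic diameter to $p$), and identifying a geodesic neighborhood of $p$ with an open subset of $\mathbb{R}^{n+d}$ via the exponential map $\exp_p$, the rescaled immersions
\[
\tilde{F}_t := \lambda(t)\,\exp_p^{-1}\circ F_t : M\to\mathbb{R}^{n+d}
\]
converge smoothly as $t\to T$ to an isometric embedding of the standard unit $n$-sphere $S^n\hookrightarrow\mathbb{R}^{n+d}$.

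From this smooth convergence the differentiable sphere conclusion follows routinely. For $t$ sufficiently close to $T$, the image $\tilde{F}_t(M)$ lies in a fixed tubular neighborhood of a round $n$-sphere $\Sigma\subset\mathbb{R}^{n+d}$, and the nearest-point projection $\pi:U\to\Sigma$ is a well-defined smooth submersion on that neighborhood. The composition $\Phi_t:=\pi\circ\tilde{F}_t:M\to\Sigma\cong S^n$ is then $C^1$-close to a diffeomorphism, and compactness of $M$ together with the global inverse function theorem yields that $\Phi_t$ is itself a diffeomorphism for all $t$ sufficiently close to $T$. Hence $M$ is diffeomorphic to $S^n$.

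There is no serious analytic obstacle at this stage, since all the hard estimates are packaged into Theorem \ref{convergence-c}. The only point that deserves to be stated carefully is the passage from smooth convergence of $\tilde{F}_t$ to an honest diffeomorphism $M\cong S^n$; as indicated above, this is a standard application of the tubular neighborhood theorem combined with the inverse function theorem, and it requires only that the convergence be in $C^1$, which Theorem \ref{convergence-c} supplies a fortiori.
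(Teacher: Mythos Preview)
Your approach matches the paper's: the corollary is stated there as an ``immediate consequence'' of Theorem~\ref{convergence-c}, with no further argument, so deducing the diffeomorphism from convergence of the rescaled flow to a round sphere is exactly what is intended.

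Two small points deserve tightening. First, the assertion that the pinching condition forces the initial immersion to be an embedding is neither justified nor needed; the conclusion $M\cong S^n$ concerns the abstract smooth manifold, and the argument via the rescaled limit does not require embeddedness at $t=0$. Second, in your final step you assert that $\Phi_t=\pi\circ\tilde F_t$ is $C^1$-close to a diffeomorphism and then invoke the global inverse function theorem; but a priori the limit map $\Phi_\infty:M\to\Sigma$ is only a local diffeomorphism between closed $n$-manifolds, hence a covering map, and could in principle be multi-sheeted. The missing observation is that $\Sigma\cong S^n$ is simply connected for $n\ge2$, so any connected covering is single-sheeted and $\Phi_\infty$ (hence $\Phi_t$ for $t$ near $T$) is a diffeomorphism. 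With that sentence added, the argument is complete.
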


\begin{remark}This differentiable sphere theorem was also obtained by Gu and Xu \cite{Gu-Xu,Xu-Gu}
provided the submanifold is simply connected. In fact, they proved
the sphere theorem for submanifolds in a Riemannian manifold  by
using a different method. For more sphere theorems of submanifolds,
we refer the readers to
\cite{Andrews-Baker,Baker,Fu-Xu,Gu-Xu,LXYZ,Shiohama-Xu-97,Xu-Gu,Xu-Zhao},
etc.
\end{remark}

Combining Theorem \ref{convergence-c} and the convergence results in
\cite{Andrews-Baker,Baker}, we obtain the following theorem.
\begin{thrm}\label{convergence-spaceform}Let $F:M^{n}\rightarrow
\mathbb{F}^{n+d}(c)$ be a smooth closed submanifold in a complete
simply connected space form with $|H|^2+n^2c>0$. Assume $F$
satisfies
\begin{eqnarray}
\label{pinch-cond-1}|A|^2\leq\begin{cases}
            \frac{4}{3n}|H|^2+\frac{1}{12}[7n-4+{\rm sgn}(c)(n-4)]c, \ &n = 2, 3, \\
            \frac{1}{n-1}|H|^2+2c, \ &n \geq 4.
        \end{cases}
\end{eqnarray}
Then either $F_t(M)$ converges to a round point in finite time, or
$c>0$ and $F_t(M)$ converges to a total geodesic sphere in
$\mathbb{F}^{n+d}(c)$ as $t\rightarrow \infty$.
\end{thrm}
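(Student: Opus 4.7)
The plan is to reduce Theorem \ref{convergence-spaceform} to three already established convergence theorems, one for each sign of the ambient curvature $c$. The unified pinching condition (\ref{pinch-cond-1}) is designed precisely so that, upon evaluating $\mathrm{sgn}(c)$, it collapses into the pinching hypothesis used in each constituent result; the entire proof then amounts to verifying this collapse in each of three cases and checking that the sign assumption $|H|^2+n^2c>0$ dovetails with the non-degeneracy condition demanded by the invoked theorem.

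When $c<0$, setting $\mathrm{sgn}(c)=-1$ turns the bracket $\tfrac{1}{12}[7n-4+\mathrm{sgn}(c)(n-4)]$ into $\tfrac{1}{12}(6n)=\tfrac{n}{2}$ for $n=2,3$, so (\ref{pinch-cond-1}) becomes exactly (\ref{pinch-cond}); the case $n\geq 4$ is already identical in the two inequalities. Theorem \ref{convergence-c} then applies verbatim and yields convergence to a round point in finite time. In this regime the hypothesis $|H|^2+n^2c>0$ contributes nothing new, being consistent with, and in fact implied by, the pinching via the Cauchy--Schwarz bound $|A|^2\geq|H|^2/n$.

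When $c=0$, the $\mathrm{sgn}(c)$ term vanishes and (\ref{pinch-cond-1}) specializes to the Euclidean pinching $|A|^2\leq\tfrac{4}{3n}|H|^2$ (for $n=2,3$) or $|A|^2\leq\tfrac{1}{n-1}|H|^2$ (for $n\geq 4$) studied by Andrews--Baker, while $|H|^2+n^2c>0$ becomes the non-minimality hypothesis $|H|^2>0$ that their theorem requires; their result then delivers convergence to a round point in finite time. When $c>0$, substituting $\mathrm{sgn}(c)=1$ turns the bracket into $\tfrac{1}{12}(8n-8)=\tfrac{2(n-1)}{3}$ for $n=2,3$, so (\ref{pinch-cond-1}) becomes $|A|^2\leq\tfrac{4}{3n}|H|^2+\tfrac{2(n-1)}{3}c$, and for $n\geq 4$ it remains $|A|^2\leq\tfrac{1}{n-1}|H|^2+2c$. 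These are exactly the spherical pinching conditions handled by Baker, whose convergence theorem produces the dichotomy in the conclusion: either $F_t(M)$ shrinks to a round point in finite time, or else it converges smoothly as $t\to\infty$ to a totally geodesic $n$-sphere in $\mathbb{F}^{n+d}(c)$.

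The main obstacle I foresee is not analytic but organizational. Since Theorem \ref{convergence-spaceform} packages three independently proved long convergence arguments under a single sign-sensitive pinching, the only genuine task is to confirm that (\ref{pinch-cond-1}) is the tightest uniform expression specializing to each original hypothesis and that the auxiliary assumption $|H|^2+n^2c>0$ matches the non-degeneracy assumption required by whichever theorem is being applied. Once these bookkeeping checks are in place, no further evolution-equation analysis or curvature estimate is needed beyond what is already contained in Theorem \ref{convergence-c} and in the works of Andrews--Baker and Baker.
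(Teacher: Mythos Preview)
Your proposal is correct and follows exactly the paper's approach: the paper presents Theorem \ref{convergence-spaceform} simply as the combination of Theorem \ref{convergence-c} with the convergence results of Andrews--Baker \cite{Andrews-Baker} and Baker \cite{Baker}, and offers no further argument. Your case-by-case verification that (\ref{pinch-cond-1}) specializes to the appropriate pinching hypothesis for each sign of $c$, together with your check on the role of $|H|^2+n^2c>0$ (which the paper records in the remark immediately following the theorem), is precisely the bookkeeping the paper leaves implicit.
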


\begin{remark}For $c>0$, $|H|^2+n^2c>0$ is automatically
satisfied. For $c=0$, $|H|^2+n^2c>0$  is equivalent to that the mean
curvature is nowhere vanishing. For $c<0$, $|H|^2+n^2c>0$  is
implied by condition (\ref{pinch-cond-1}).
\end{remark}

\begin{remark}For $c>0$, the maximal existence time of the mean curvature flow may be finite or infinite. For $c\leq0$, the mean curvature flow with a closed
initial submanifold always has finite maximal existence time.
\end{remark}

\section{Basic equations}
Let $F:M\times [0,T)\rightarrow \mathbb{F}^{n+d}(c)$ be a smooth
mean curvature flow with initial closed immersion $F_0:M\rightarrow
\mathbb{F}^{n+d}(c)$. Denote by $g(t)$ and  $d\mu_t$ the induced
metric and the volume form on $M$. Let $A$ and $H$ be the second
fundamental form and the mean curvature vector of $M$ in
$\mathbb{F}^{n+d}(c)$, respectively. We shall make use of the
following convention on the range of indices.
$$1\leq i,j,k,\cdots \leq n,\ \ 1\leq A,B,C,\cdots \leq n+d\ \
and\ \ n+1\leq\alpha,\beta,\gamma, \cdots \leq n+d.$$

As in \cite{Andrews-Baker,B}, we consider the evolution on the
spatial tangent bundle. Choose a local orthonormal frame $\{e_i\}$
for the spatial tangent bundle and a local orthonormal frame
$\{\nu_\alpha\}$ for the normal bundle. Let $\{\omega_i\}$ be the
dual frame of  $\{e_i\}$. Then $A$ and $H$ can be written as
\begin{eqnarray*}
A=\sum_{i,j,\alpha}h_{ij\alpha}\omega_i\otimes\omega_j\otimes\nu_\alpha=\sum_{i,j}h_{ij}\omega_i\otimes\omega_j,
\ \ H=\sum_\alpha H_{\alpha}\nu_\alpha.\end{eqnarray*}

We have the following evolution equations.
\begin{eqnarray}\label{|A|}
    \frac{\partial}{\partial t}|A|^2 = \Delta|A|^2 - 2|\nabla A|^2 + 2R_1 + 4c|H|^2 -
    2nc|A|^2,\end{eqnarray}
\begin{eqnarray}\label{|H|}
   \frac{\partial}{\partial t}|H|^2 =\Delta|H|^2 - 2|\nabla H|^2 + 2R_2 +
   2nc|H|^2,
\end{eqnarray}
where
\begin{eqnarray}\label{R_1}
R_1=\sum_{\alpha,\beta}\Big(\sum_{i,j}h_{ij\alpha}h_{ij\beta}\Big)^2+|R^\bot|^2,
\end{eqnarray}
\begin{eqnarray}\label{R_bar}
|R^\bot|^2=\sum_{i,j,\alpha,\beta}\Big(\sum_p\Big(h_{ip\alpha}h_{jp\beta}-h_{jp\alpha}h_{ip\beta}\Big)\Big)^2,
\end{eqnarray}
\begin{eqnarray}\label{R_2}
R_2=\sum_{i,j}\Big(\sum_{\alpha} H_{\alpha}h_{ij\alpha}\Big)^2.
\end{eqnarray}
The contracted form of Simons' identity for traceless second
fundamental form $\mathring{A}:=A-\frac{1}{n} g\otimes H$ is
\begin{eqnarray}\label{Simons identity}
\frac{1}{2}\triangle
|\mathring{A}|^2=\mathring{h}_{ij}\nabla_i\nabla_jH+|\nabla
\mathring{A}|^2+Z+nc|\mathring{A}|^2.
\end{eqnarray}
Here
\begin{eqnarray}\label{Z}
Z=-R_1+\sum_{i,j,p,\alpha,\beta}H_\alpha
h_{ip\alpha}h_{ij\beta}h_{pj\beta}.
\end{eqnarray}

We also have the following inequality.
\begin{eqnarray}\label{nabla A}
|\nabla A|^2\geq\frac{3}{n+2}|\nabla H|^2.
\end{eqnarray}

\section{Preserved curvature pinching condition}
Now we prove that the pinching condition (\ref{pinch-cond}) is
preserved under the mean curvature flow with arbitrary codimension
in the hyperbolic space.
\begin{lem}\label{preserved pinch}For $c<0$ and $n\geq2$, if the initial immersion satisfies
(\ref{pinch-cond}), then this condition is preserved along the mean
curvature flow.
\end{lem}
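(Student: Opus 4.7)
The plan is to apply the parabolic maximum principle to the auxiliary function
$Q := |A|^2 - \alpha|H|^2 - \beta c$,
where $(\alpha,\beta)=(\tfrac{4}{3n},\tfrac{n}{2})$ for $n=2,3$ and $(\alpha,\beta)=(\tfrac{1}{n-1},2)$ for $n\geq 4$. The pinching condition \eqref{pinch-cond} is exactly $Q\leq 0$, so it suffices to show this sign is preserved along the flow.

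First I would combine the evolution equations \eqref{|A|} and \eqref{|H|} (using that $c$ is constant) to obtain
\begin{align*}
\frac{\partial Q}{\partial t} = \Delta Q - 2(|\nabla A|^2 - \alpha|\nabla H|^2) + 2(R_1 - \alpha R_2) + (4-2\alpha n)c|H|^2 - 2nc|A|^2.
\end{align*}
Substituting $|A|^2 = Q + \alpha|H|^2 + \beta c$ in the last term isolates a $Q$-coefficient:
\begin{align*}
\frac{\partial Q}{\partial t} = \Delta Q - 2(|\nabla A|^2 - \alpha|\nabla H|^2) + 2(R_1 - \alpha R_2) - 2nc\,Q + (4-4\alpha n)c|H|^2 - 2\beta n c^2.
\end{align*}
A direct check shows $\alpha \leq \tfrac{3}{n+2}$ in each case, so \eqref{nabla A} makes the gradient term nonpositive and it may be discarded. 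Moreover $-2nc>0$, so $-2nc\,Q$ is nonpositive on the region $\{Q\leq 0\}$ and is harmless for the maximum principle.

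The main obstacle is the sign of the remaining reaction at a hypothetical first-touching point, where $Q=0$ and hence $|A|^2 = \alpha|H|^2 + \beta c$. There I must show
\begin{align*}
2(R_1 - \alpha R_2) + (4-4\alpha n)c|H|^2 - 2\beta n c^2 \leq 0.
\end{align*}
Since $\alpha > 1/n$ in every case and $c<0$, the middle term is \emph{nonnegative}, so it must be compensated by the constant negative term $-2\beta n c^2$ together with $R_1 - \alpha R_2$. The key tool is an algebraic estimate of Andrews--Baker type: in the Euclidean setting \cite{Andrews-Baker}, via a principal-normal decomposition of the second fundamental form and Lagrange-type identities for $|R^\perp|^2$, one obtains $R_1 - \alpha R_2 \leq 0$ whenever $|A|^2 \leq \alpha|H|^2$, with a finer bound proportional to $(\alpha|H|^2 - |A|^2)$ times a nonnegative factor of order $|H|^2$. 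Under our critical pinching the surplus $\alpha|H|^2 - |A|^2 = -\beta c > 0$ is strictly positive, and I expect the resulting negative contribution of order $|c||H|^2$ to cancel $(4-4\alpha n)c|H|^2$ exactly for the specific $(\alpha,\beta)$ in \eqref{pinch-cond}; any residue will be absorbed by $-2\beta n c^2$. Carrying out this book-keeping is the technical heart of the proof and must be done separately for $n\geq 4$ and for the low-dimensional cases $n=2,3$, which in \cite{Andrews-Baker} already require a distinct algebraic lemma involving a $|H|^4$-term.

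With the algebraic inequality in hand, the standard parabolic maximum principle --- applied for instance to $\widetilde Q := Q - \varepsilon(1+t)$ to rule out a first positive time and then sending $\varepsilon\to 0^+$ --- yields $Q\leq 0$ throughout $[0,T)$ and completes the proof.
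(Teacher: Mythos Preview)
Your overall strategy---maximum principle applied to $Q=|A|^2-\alpha|H|^2-\beta c$, reducing to a sign check on the reaction term at a point where $Q=0$---is exactly the paper's approach, and your evolution equation for $Q$ is correct. However, the step you label ``the technical heart of the proof'' is not actually carried out: you only \emph{expect} the Andrews--Baker algebra to produce a negative term of order $|c||H|^2$ that cancels $(4-4\alpha n)c|H|^2$, and you defer the verification. This is precisely where the lemma lives; without it you have an outline, not a proof.

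The paper fills this gap as follows. At a point with $Q=0$ one has $H\neq 0$, so one can choose $\nu_{n+1}=H/|H|$ and split the traceless second fundamental form as $|\mathring{A}|^2=|\mathring{A}_H|^2+|\mathring{A}_I|^2$. Using the standard Andrews--Baker upper bound for $R_1-\alpha R_2$ in these variables and then \emph{eliminating} $|H|^2$ via the relation $|H|^2=\dfrac{|\mathring{A}|^2-\beta c}{\alpha-\frac{1}{n}}$ (valid since $Q=0$), the entire reaction term becomes an explicit quadratic polynomial in $|\mathring{A}_H|^2$, $|\mathring{A}_I|^2$ and $c$ with coefficients depending only on $n,\alpha,\beta$:
\begin{align*}
&\Big(6-\tfrac{2}{n(\alpha-1/n)}\Big)|\mathring{A}_H|^2|\mathring{A}_I|^2
+\Big(3-\tfrac{2}{n(\alpha-1/n)}\Big)|\mathring{A}_I|^4\\
&\quad+\Big(2\beta-4n+\tfrac{2\beta}{n(\alpha-1/n)}\Big)c|\mathring{A}_H|^2
+4\Big(\tfrac{\beta}{n(\alpha-1/n)}-n\Big)c|\mathring{A}_I|^2
-2\beta\Big(\tfrac{\beta}{n(\alpha-1/n)}-n\Big)c^2.
\end{align*}
For the specific choices $(\alpha,\beta)=(\tfrac{4}{3n},\tfrac{n}{2})$ when $n=2,3$ and $(\tfrac{1}{n-1},2)$ when $n\ge 4$, one checks directly that every coefficient has the right sign (using $c<0$), so the whole expression is $\le 0$. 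This coefficient check is the content you are missing; the cancellation you anticipate is not automatic and depends on these exact values of $\beta$, which is why the hyperbolic constants differ from the spherical ones.
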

\begin{proof}We consider $Q=|A|^2-\alpha|H|^2-\beta c$, where the constants
\begin{equation*}
        \alpha\leq \begin{cases}
            \frac{4}{3n}, \ &n = 2, 3, \\
            \frac{1}{n-1}, \ &n \geq 4,
        \end{cases}\ \text{and } \
        \beta\geq
            \begin{cases}
                 \frac{n}{2}, \ &n = 2, 3, \\
                2, \ &n \geq 4.
            \end{cases}
\end{equation*}

By (\ref{|A|}) and (\ref{|H|}) we have
\begin{eqnarray}\label{Q}
\frac{\partial}{\partial t}Q&=&\triangle Q-2(|\nabla
A|^2-\alpha|\nabla H|^2)\nonumber\\
&&+2R_1-2\alpha
R_2-2nc|\mathring{A}|^2-2n\bigg(\alpha-\frac{1}{n}\bigg)c|H|^2.
\end{eqnarray}

We only have to show that if $Q=0$ at a point $x\in M$, then
\begin{eqnarray*}2R_1-2\alpha
R_2-2nc|\mathring{A}|^2-2n\bigg(\alpha-\frac{1}{n}\bigg)c|H|^2\leq0\end{eqnarray*}
holds at $x$. We also have $H\neq0$ at $x$. Choose $\{\nu_\alpha\}$
such that $\nu_{n+1}=\frac{H}{|H|}$. Let
$A_H=\sum_{i,j}h_{ij,n+1}\omega_i\otimes\omega_j$. Set
$\mathring{A}_H=A_H-\frac{|H|}{n}{\rm Id}$  and
$|\mathring{A}_I|^2=|\mathring{A}|^2-|\mathring{A}_H|^2$.

We replace $|H|^2$ with $\frac{|\mathring{A}|^2-\beta
c}{\alpha-\frac{1}{n}}$. Then
\begin{equation}\label{1}
\begin{split}
&2R_1-2\alpha
R_2-2nc|\mathring{A}|^2-2n\bigg(\alpha-\frac{1}{n}\bigg)c|H|^2\\
\leq&2|\mathring{A}_H|^2-2\bigg(\alpha-\frac{1}{n}\bigg)|\mathring{A}_H|^2|H|^2
+\frac{2}{n}|\mathring{A}_H|^2|H|^2-\frac{2}{n}\bigg(\alpha-\frac{1}{n}\bigg)|H|^4\\
&+8|\mathring{A}_H|^2|\mathring{A}_I|^2+3|\mathring{A}_I|^2-2nc(|\mathring{A}_H|^2+|\mathring{A}_I|^2)
-2n\bigg(\alpha-\frac{1}{n}\bigg)c|H|^2\\
=&\bigg(6-\frac{2}{n(\alpha-\frac{1}{n})}\bigg)|\mathring{A}_H|^2|\mathring{A}_I|^2
+\bigg(3-\frac{2}{n(\alpha-\frac{1}{n})}\bigg)|\mathring{A}_I|^4\\
&+\bigg(2\beta-4n+\frac{2\beta}{n(\alpha-\frac{1}{n})}\bigg)c|\mathring{A}_H|^2
+4\bigg(\frac{\beta}{n(\alpha-\frac{1}{n})}-n\bigg)c|\mathring{A}_I|^2\\
&-2\beta\bigg(\frac{\beta}{n(\alpha-\frac{1}{n})}-n\bigg)c^2.
\end{split}
\end{equation}
By the definition of $\alpha$ and $\beta$, we know that the right
hand side of (\ref{1}) is nonpositive for $n\geq2$. This completes
the proof of the lemma.
\end{proof}

For $\epsilon>0$, set

\begin{equation*}
        \alpha_\epsilon= \begin{cases}
            \frac{4}{3n+n\epsilon}, \ &n = 2, 3, \\
            \frac{1}{n-1+\epsilon}, \ &n \geq 4,
        \end{cases}\ \text{and } \
        \beta_\epsilon=
            \begin{cases}
                 \frac{n}{2}(1+\epsilon), \ &n = 2, 3, \\
                2(1+\epsilon), \ &n \geq 4.
            \end{cases}
\end{equation*}
If the initial immersion satisfies
$|A|^2<\frac{4}{3n}|H|^2+\frac{n}{2}c$ for $n=2,3$, and
$|A|^2<\frac{1}{n-1}|H|^2+2 c$ for $n\geq4$, then there exists an
$\epsilon>0$ such that $|A|^2\leq\alpha_\epsilon|H|^2+\beta_\epsilon
c$ holds on $M_0$. From the proof of Lemma \ref{preserved pinch},
this inequality also holds for $t>0$. On the other hand, if
$|A|^2=\frac{4}{3n}|H|^2+\frac{n}{2}c$ for $n=2,3$, or
$|A|^2=\frac{1}{n-1}|H|^2+2 c$ for $n\geq4$ holds somewhere on
$M_0$, then by the maximum principle, we see that either the
equality holds everywhere on $M_0$, or the strict inequality holds
everywhere for $t>0$. For the first case, we have $\nabla A=0$ and
$\mathring{A}_I=0$ on $M_0$. By \cite{Erbacher}, $M_0$ lies in an
$(n+1)$-dimensional total geodesic submanifold of
$\mathbb{F}^{n+d}(c)$. Since $\nabla A=0$, from Theorem 4 of
\cite{L}, $M_0$ is either locally isometric to an Euclidean space,
or locally isometric to a  product $\mathbb{F}^k(c_1)\times
\mathbb{F}^{n-k}(c_2)$ for some $c_1>0$, $c_2<0$ and $k=0,\cdots,
n$. Since $M_0$ is closed, we see that $M_0$ is a totally umbilical
sphere. Then $|A|^2\leq\alpha_\epsilon|H|^2+\beta_\epsilon c$ holds
on $M_0$ for some $\epsilon>0$. For the second case, we see that
after a short time, we also have
$|A|^2\leq\alpha_\epsilon|H|^2+\beta_\epsilon c$ for some
$\epsilon>0$. Hence, we may assume that
$|A|^2\leq\alpha_\epsilon|H|^2+\beta_\epsilon c$ for some
$\epsilon\in(0,1)$ and $t\geq t_0>0$.

\section{Pinching of $\mathring{A}$ along the mean curvature flow}

Assume that $c<0$. We prove a pinching estimate  for the traceless
second fundamental form, which guarantees that $M_t$ becomes
spherical along the mean curvature flow.

\begin{thrm}\label{thm1}There are positive constants $C_0$ and $\sigma_0$ independent of $t$ such
that\begin{eqnarray}
\label{mathring{A}-pinching}|\mathring{A}|^2\leq C_0|H|^{2-\sigma_0}
\end{eqnarray}
holds along the mean curvature flow.
\end{thrm}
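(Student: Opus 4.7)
The plan is to adapt the Andrews--Baker/Huisken pinching scheme for $|\mathring{A}|^2/|H|^{2-\sigma}$ to the hyperbolic setting. I would study an auxiliary function of the form $f_\sigma = |\mathring{A}|^2 |H|^{\sigma-2}$ for a small parameter $\sigma>0$, possibly modified to $f_\sigma = |\mathring{A}|^2 (|H|^2+\kappa c)^{-(1-\sigma/2)}$ with a suitable constant $\kappa>0$, so that the negative ambient curvature combines cleanly with the gap produced by the strict pinching from Lemma \ref{preserved pinch}. Note that the preserved pinching $|A|^2\le\alpha_\epsilon|H|^2+\beta_\epsilon c$, combined with the trace inequality $|A|^2\ge|H|^2/n$, yields a uniform positive lower bound $|H|^2\ge \beta_\epsilon|c|/(\alpha_\epsilon-1/n)$. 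This ensures that $f_\sigma$ is well defined and smooth on $M_t$ for all $t\ge t_0$, and in particular that the flow cannot degenerate through $|H|\to 0$.

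I would then derive a parabolic inequality for $f_\sigma$ using the evolution equations (\ref{|A|}) and (\ref{|H|}), the contracted Simons identity (\ref{Simons identity}), and the Kato-type gradient inequality (\ref{nabla A}). After reorganization, the inequality should take the schematic form
\begin{equation*}
\partial_t f_\sigma \le \Delta f_\sigma + \frac{2(1-\sigma/2)}{|H|^2}\langle\nabla|H|^2,\nabla f_\sigma\rangle - \delta\, f_\sigma\, \frac{|\nabla H|^2}{|H|^2} + \mathcal{R}_\sigma,
\end{equation*}
where the strictly negative gradient term (with some $\delta>0$) reflects the margin in (\ref{nabla A}), and $\mathcal{R}_\sigma$ collects the reaction pieces built from $R_1-\tfrac{1}{n}R_2$, the curvature terms $-2nc|\mathring{A}|^2$ and $2nc|H|^2$, and lower-order quantities. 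Using the decomposition $|\mathring{A}|^2=|\mathring{A}_H|^2+|\mathring{A}_I|^2$ and the algebraic estimates for $R_1, R_2$ already developed in (\ref{1}), together with the strict pinching and the positive lower bound on $|H|$, I would verify that $\mathcal{R}_\sigma\le 0$ pointwise for all sufficiently small $\sigma>0$.

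Since the cross gradient term has the wrong sign for a direct pointwise maximum principle, I would follow the standard route of testing against $f_\sigma^{p-1}$ for large $p$, integrating over $M_t$, and integrating by parts so that the cross gradient term is absorbed into the good gradient term. Invoking the Michael--Simon Sobolev inequality on $M_t$ then yields $L^p$ bounds on $f_\sigma$ with constants growing only polynomially in $p$, and a Stampacchia iteration upgrades these to the desired pointwise estimate $|\mathring{A}|^2\le C_0 |H|^{2-\sigma_0}$. The main obstacle is the analysis of $\mathcal{R}_\sigma$: since $c<0$ contributes $-2nc|\mathring{A}|^2>0$ to $\partial_t|\mathring{A}|^2$ and $2nc|H|^2<0$ to $\partial_t|H|^2$, both curvature contributions push $f_\sigma$ in the unfavourable direction, and one must exploit the strict pinching (together with the uniform lower bound on $|H|$) to dominate them uniformly in time. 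Choosing the modification constant $\kappa$ in the denominator of $f_\sigma$ appropriately is expected to be the key device for carrying this out cleanly.
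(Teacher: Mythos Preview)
Your overall architecture (modified pinching quantity, evolution inequality, $L^p$-estimate, Stampacchia iteration) matches the paper's, and your observation that $|H|^2$ is uniformly bounded below is correct and used. However, there is a genuine gap in the treatment of the reaction term. You propose to achieve $\mathcal{R}_\sigma\le 0$ pointwise by tuning the constant $\kappa$ in the denominator, but this is impossible in the hyperbolic case: with $W=a|H|^2+\kappa c>0$ the ambient--curvature contribution to $\partial_t f_\sigma$ is
\[
\frac{-2nc|\mathring{A}|^2}{W^{1-\sigma}}-(1-\sigma)\frac{|\mathring{A}|^2}{W^{2-\sigma}}\cdot 2anc|H|^2
= 2n|c|\,f_\sigma\Big(1+(1-\sigma)\frac{a|H|^2}{W}\Big)>0,
\]
and no choice of $a,\kappa$ changes the sign. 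Nor can this be absorbed into the algebraic reaction piece: at a codimension-one point with $|\mathring{A}_I|=0$ one has $R_1-\frac{|A|^2}{|H|^2}R_2=-|\mathring{A}_H|^4$, which is $o(|\mathring{A}|^2)$ as $|\mathring{A}_H|\to 0$, while the curvature contribution is of order $|\mathring{A}|^2$. Hence $\mathcal{R}_\sigma>0$ somewhere whenever $|\mathring{A}|$ is small but nonzero, regardless of the strict pinching or the lower bound on $|H|$.

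The paper does not try to make $\mathcal{R}_\sigma\le 0$. Instead it accepts a residual zeroth-order term $+2\bar b\,f_\sigma$ with $\bar b>0$ in the evolution inequality (see (\ref{f-sigma-ineq})), so that after the $L^p$ estimate one only gets $\partial_t\int_{M_t} f_\sigma^p\le 2\bar b\,p\int_{M_t} f_\sigma^p$ and hence $\int_{M_t} f_\sigma^p\le e^{2\bar b p t}\int_{M_{t_0}} f_\sigma^p$. The missing idea in your proposal is precisely how to control this exponential growth: the paper proves separately, via a maximum-principle argument on the ambient distance function (Lemma~\ref{finite-time}), that $T_{\max}<\infty$ for any closed initial submanifold in hyperbolic space, and this is what makes the exponential bound harmless. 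Without this step (or an alternative device yielding a genuinely time-independent $L^p$ bound), the iteration does not close.
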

\begin{proof}
We consider the function
$f_\sigma=\frac{|\mathring{A}|^2}{(a|H|^2+\beta_\epsilon
c)^{1-\sigma}}$, where $\sigma\in (0,1)$ and
\begin{equation*}
        a= \begin{cases}
            \frac{1}{3n+n\epsilon}, \  &n = 2, 3 \\
            \frac{1}{n(n-1+\epsilon)}, \  &n \geq 4.
        \end{cases}
\end{equation*}
Notice that
\begin{equation}\label{>0-1}
\begin{split}
&\ a|H|^2+\beta_\epsilon
c-\bigg(\Big(\alpha_\epsilon-\frac{1}{n}\Big)|H|^2+\beta_\epsilon
c\bigg)\\
\geq&\
\frac{\epsilon}{3n+n\epsilon}|H|^2\\
\geq&\
\frac{\epsilon}{3n+n\epsilon}\cdot\beta(-c)\\
>&\ 0 \end{split}
\end{equation} for $n=2,3$, and
\begin{equation}\label{>0-2}
\begin{split}
&\ a|H|^2+\beta_\epsilon
c-\bigg(\Big(\alpha_\epsilon-\frac{1}{n}\Big)|H|^2+\beta_\epsilon
c\bigg)\\
\geq&\
\frac{\epsilon}{n(n-1+\epsilon)}|H|^2\\
\geq&\
\frac{\epsilon}{n(n-1+\epsilon)}\cdot\beta(-c)\\
>&\ 0
\end{split}
\end{equation} for $n\geq4$. So $f_\sigma$ is well-defined. From
(\ref{>0-1}) and (\ref{>0-2}) we also have
\begin{eqnarray}\label{aH2}
a|H|^2+\beta_\epsilon c\geq b|H|^2,
\end{eqnarray}
where
\begin{equation*}
        b= \begin{cases}
            \frac{\epsilon}{3n+n\epsilon}, \  &n = 2, 3 \\
            \frac{\epsilon}{n(n-1+\epsilon)}, \  &n \geq 4.
        \end{cases}
\end{equation*}

By a similar computation as in \cite{Andrews-Baker}, we have
\begin{equation}
\begin{split}
\label{f-sigma} \frac{\partial}{\partial t}f_{\sigma}=&\triangle
f_{\sigma}+\frac{2a(1-\sigma)}{a|H|^2+\beta_\epsilon c}\langle
\nabla |H|^2,\nabla f_{\sigma}\rangle\\
&-\frac{2}{(a|H|^2+\beta_\epsilon c)^{1-\sigma}}\bigg(|\nabla
A|^2-\frac{1}{n}|\nabla
H|^2-\frac{a|\mathring{A}|^2}{a|H|^2+\beta_\epsilon
c}|\nabla H|^2\bigg)\\
&-\frac{4a^2\sigma(1-\sigma)}{(a|H|^2+\beta_\epsilon
c)^2}f_\sigma|H|^2\cdot \Big|\nabla |H|\Big|^2-\frac{2a\sigma
f_\sigma}{a|H|^2+\beta_\epsilon c}|\nabla H|^2\\
&+\frac{2}{(a|H|^2+\beta_\epsilon
c)^{1-\sigma}}\bigg(R_1-\frac{1}{n}R_2-\frac{aR_2|\mathring{A}|^2}{a|H|^2+\beta_\epsilon
c}-nc|\mathring{A}|^2\\
&-\frac{an(1-\sigma)c|\mathring{A}|^2|H|^2}{a|H|^2+\beta_\epsilon
c}\bigg)\\
&+\frac{2a\sigma R_2f_\sigma}{a|H|^2+\beta_\epsilon c}.
\end{split}
\end{equation}

By (\ref{nabla A}), we have
\begin{equation}\label{2}
\begin{split}
&\ |\nabla A|^2-\frac{1}{n}|\nabla
H|^2-\frac{a|\mathring{A}|^2}{a|H|^2+\beta_\epsilon c}|\nabla
H|^2\\
\geq&\ \bigg(\frac{3}{n+2}-\frac{1}{n} -
\frac{a\Big((\alpha_\epsilon-\frac{1}{n})H^2+\beta_\epsilon
c\Big)}{a|H|^2+\beta_\epsilon c} \bigg)|\nabla H|^2\\
\geq&\ \bigg(\frac{3}{n+2}-\frac{1}{n} -a \bigg)|\nabla
H|^2\\
:=&\ \epsilon_\nabla|\nabla H|^2.
\end{split}
\end{equation}
Here $\epsilon_\nabla$ is a positive constant for $n\geq 2$.

We also have the following estimate.
\begin{equation}\label{3}
\begin{split}
&\
R_1-\frac{1}{n}R_2-\frac{aR_2|\mathring{A}|^2}{a|H|^2+\beta_\epsilon
c}\\
\leq&\
R_1-\frac{1}{n}R_2-\frac{R_2|\mathring{A}|^2}{|H|^2}\\
=&\ R_1-\frac{R_2|A|^2}{|H|^2}\\
\leq&\
|\mathring{A}_H|^4-2\bigg(\frac{|A|^2}{|H|^2}-\frac{2}{n}\bigg)|\mathring{A}_H|^2|H|^2
   -\frac{2}{n}\bigg(\frac{|A|^2}{|H|^2}-\frac{1}{n}\bigg)|H|^4\\
&\ -4|\mathring{A}_H|^2|\mathring{A}_I|^2-\frac{3}{2}|\mathring{A}_I|^4\\
\leq&\ 0.
\end{split}
\end{equation}
In (\ref{3}) we have used the pinching condition
$|A|^2\leq\alpha_\epsilon|H|^2+\beta_\epsilon
c<\alpha_\epsilon|H|^2$ for $\epsilon\in(0,1)$.

By (\ref{aH2}), we have
\begin{equation}\label{4}
\begin{split}
&\
-nc|\mathring{A}|^2-\frac{an(1-\sigma)c|\mathring{A}|^2|H|^2}{a|H|^2+\beta_\epsilon
c}
\\
\leq&-nc|\mathring{A}|^2-\frac{an(1-\sigma)c|\mathring{A}|^2(a|H|^2+\beta_\epsilon
c)
}{b(a|H|^2+\beta_\epsilon c)}\\
\leq&-nc|\mathring{A}|^2-\frac{anc}{b}|\mathring{A}|^2\\
:=&\ \bar{b}|\mathring{A}|^2.
\end{split}
\end{equation}
For the last term of right hand side of (\ref{f-sigma}), we have by
(\ref{aH2})
\begin{equation}\label{5}
\begin{split}
\frac{2a\sigma R_2f_\sigma}{a|H|^2+\beta_\epsilon c}
\leq\frac{2a\sigma |H|^2|A|^2f_\sigma}{a|H|^2+\beta_\epsilon c}
\leq\frac{2a\sigma }{b}|A|^2f_\sigma:=\tilde{b}\sigma|A|^2f_\sigma.
\end{split}
\end{equation}

Combining (\ref{f-sigma}), (\ref{2}), (\ref{3}), (\ref{4}) and
(\ref{5}), we have
\begin{equation}
\begin{split}
\label{f-sigma-ineq} \frac{\partial}{\partial t}f_{\sigma}\leq&\
\triangle f_{\sigma}+\frac{2a(1-\sigma)}{a|H|^2+\beta_\epsilon
c}\langle \nabla |H|^2,\nabla
f_{\sigma}\rangle-\frac{2\epsilon_\nabla}{(a|H|^2+\beta_\epsilon
c)^{1-\sigma}}|\nabla
H|^2\\
&\ +2\bar{b}f_\sigma+\tilde{b}\sigma|A|^2f_\sigma.
\end{split}
\end{equation}

To deal with  the last term of the right hand side of
(\ref{f-sigma-ineq}), we need the following estimate.

\begin{prop}There exists a positive constant
$\varepsilon$ independent of $t$ such that
\begin{eqnarray}\label{Z-ineq}
Z+nc|\mathring{A}|^2\geq \varepsilon
|\mathring{A}|^2(a|H|^2+\beta_\epsilon c)
\end{eqnarray}
holds for $t\geq t_0$.
\end{prop}
\begin{proof}By the argument in the proof of Lemma 5.4 in
\cite{Baker}, we only have to show
\begin{eqnarray*}
-\frac{\beta_\epsilon}{\alpha_\epsilon-\frac{1}{n}}\bigg(\frac{1}{n}-\frac{n-2}{2n(n-1)}\bigg)+n\leq0.
\end{eqnarray*}
This is true by our choice of $\alpha_\epsilon$ and
$\beta_\epsilon$.
\end{proof}

\begin{prop}
For any $\eta>0$, $p\geq2$ and $t\geq t_0$, we have
\begin{equation}
\begin{split}
\label{integral-ineq} \int_{M_t}f_{\sigma}^{p}(a|H|^2+\beta_\epsilon
c)d\mu_t \leq&\
\frac{2p\eta+5}{b\varepsilon}\int_{M_t}\frac{f_{\sigma}^{p-1}}{(a|H|^2+\beta_\epsilon
c)^{1-\sigma}}|\nabla
H|^2d\mu_t\\
&\ +\frac{2(p-1)}{b\eta\varepsilon}\int_{M_t}f_\sigma^{p-2}|\nabla
f_\sigma|^2d\mu_t.
\end{split}
\end{equation}
\end{prop}

\begin{proof}
We have the following estimate.
\begin{equation}\label{1-ineq}
\begin{split}
&\ 2\int_{M_t}\frac{f_{\sigma}^{p-1}}{(a|H|^2+\beta_\epsilon
c)^{1-\sigma}}Zd\mu_t+2nc\int_{M_t}\frac{f_{\sigma}^{p-1}}{(a|H|^2+\beta_\epsilon
c)^{1-\sigma}}|\mathring{A}|^2d\mu_t\\
\leq&\
2(p-1)\int_{M_t}\frac{f_{\sigma}^{p-2}}{(a|H|^2+\beta_\epsilon
c)^{1-\sigma}}|\nabla f_\sigma||\mathring{A}||\nabla
H|d\mu_t\\
&+\frac{2(n-1)}{n}\int_{M_t}\frac{f_{\sigma}^{p-1}}{(a|H|^2+\beta_\epsilon
c)^{1-\sigma}}|\nabla H|^2d\mu_t\\
&+4\int_{M_t}\frac{f_{\sigma}^{p-1}}{(a|H|^2+\beta_\epsilon
c)^{2-\sigma}}|H||\mathring{A}||\nabla H|^2d\mu_t\\
&+4(1-\sigma)(p-2)\int_{M_t}\frac{f_{\sigma}^{p-1}}{(a|H|^2+\beta_\epsilon
c)}|H||\nabla H||\nabla f_\sigma|d\mu_t\\
&+4\int_{M_t}\frac{f_{\sigma}^{p}}{(a|H|^2+\beta_\epsilon
c)^{2}}|H|^2|\nabla H|^2d\mu_t.
\end{split}
\end{equation}
Since $|\mathring{A}|^2\leq f_\sigma(a|H|^2+\beta_\epsilon
c)^{1-\sigma}$ and $f_\sigma\leq (a|H|^2+\beta_\epsilon
c)^{\sigma}$, by choosing $\sigma\in (0,1)$ we have the following
estimates.
\begin{equation}\label{2-ineq}
\begin{split}
&\ 2(p-1)\int_{M_t}\frac{f_{\sigma}^{p-2}}{(a|H|^2+\beta_\epsilon
c)^{1-\sigma}}|\nabla f_\sigma||\mathring{A}||\nabla
H|d\mu_t\\
\leq&\ \frac{1-\sigma}{\eta}\int_{M_t}f_\sigma^{p-2}|\nabla
f_\sigma|^2d\mu_t+(p-1)\eta
\int_{M_t}\frac{f_{\sigma}^{p-1}}{(a|H|^2+\beta_\epsilon
c)^{1-\sigma}}|\nabla H|^2d\mu_t,
\end{split}
\end{equation}
\begin{equation}\label{3-ineq}
\begin{split}
&\ 4\int_{M_t}\frac{f_{\sigma}^{p-1}}{(a|H|^2+\beta_\epsilon
c)^{2-\sigma}}|H||\mathring{A}||\nabla H|^2d\mu_t\\
\leq&\
\frac{4}{b}\int_{M_t}\frac{f_{\sigma}^{p-1}}{(a|H|^2+\beta_\epsilon
c)^{1-\sigma}}|\nabla H|^2 d\mu_t,
\end{split}
\end{equation}
\begin{equation}\label{4-ineq}
\begin{split}
&\ 4(p-2)\int_{M_t}\frac{f_{\sigma}^{p-1}}{a|H|^2+\beta_\epsilon
c}|H||\nabla H||\nabla f_\sigma|d\mu_t\\
\leq&\ (p-2)\int_{M_t}\frac{1}{a|H|^2+\beta_\epsilon
c}\bigg(\frac{2}{\eta} f_{\sigma}^{p-2}|H|^2|\nabla
f_\sigma|^2+2\eta f_\sigma^p|\nabla H|^2\bigg)d\mu_t\\
\leq&\ \frac{2(p-2)}{b\eta}\int_{M_t}f_\sigma^{p-2}|\nabla
f_\sigma|^2d\mu_t\\
&\ +2(p-2)\eta\int_{M_t}\frac{f_\sigma^{p-1}}{(a|H|^2+\beta_\epsilon
c)^{1-\sigma}}|\nabla H|^2d\mu_t,
\end{split}
\end{equation}
\begin{eqnarray}\label{5-ineq}
&&4\int_{M_t}\frac{f_{\sigma}^{p}}{(a|H|^2+\beta_\epsilon
c)^{2}}|H|^2|\nabla
H|^2d\mu_t\leq\frac{4}{b}\int_{M_t}\frac{f_{\sigma}^{p-1}}{(a|H|^2+\beta_\epsilon
c)^{1-\sigma}}|\nabla H|^2d\mu_t.
\end{eqnarray}
In (\ref{3-ineq}), (\ref{4-ineq}) and (\ref{5-ineq}) we have used
(\ref{aH2}).

By (\ref{Z-ineq}), we have
\begin{equation}\label{6-ineq}
\begin{split}
&\ 2\int_{M_t}\frac{f_{\sigma}^{p-1}}{(a|H|^2+\beta_\epsilon
c)^{1-\sigma}}(Z+2nc|\mathring{A}|^2)d\mu_t\\
\geq&\ 2\varepsilon\int_{M_t}f_{\sigma}^{p}(a|H|^2+\beta_\epsilon
c)d\mu_t.\end{split}
\end{equation}

Combining (\ref{1-ineq})-(\ref{6-ineq}), we obtain
\begin{equation}\label{7-ineq}
\begin{split}
2\varepsilon\int_{M_t}f_{\sigma}^{p}(a|H|^2+\beta_\epsilon c)d\mu_t
\leq&\
\frac{3p\eta+10}{b}\int_{M_t}\frac{f_{\sigma}^{p-1}}{(a|H|^2+\beta_\epsilon
c)^{1-\sigma}}|\nabla
H|^2d\mu_t\\
&\ +\frac{3(p-1)}{b\eta}\frac{}{}\int_{M_t}f_\sigma^{p-2}|\nabla
f_\sigma|^2d\mu_t.
\end{split}
\end{equation}
Dividing through by $2\varepsilon$ completes the proof.
\end{proof}

Now we show that the $L^p$-norm of $f_\sigma$ is  bounded for
sufficiently high $p$.

\begin{lem}\label{lem1}For any $p\geq\max\{2,\frac{8}{b\epsilon_\nabla}+1\}$ and
$\sigma\leq
\min\Big\{\frac{b^2\varepsilon\epsilon_\nabla}{10\tilde{b}\alpha_\epsilon}
,\frac{b^2\varepsilon\sqrt{\epsilon_\nabla}}{4\tilde{b}\alpha_\epsilon\sqrt{p}},\frac{1}{2}\Big\}$,
there exist a constant $C$ independent of $t$ such that for all
$t\in [0,T_{\max})$ where $T_{\max}<\infty$, we have
\begin{eqnarray}\label{8-ineq}
\bigg(\int_{M_t}f_\sigma^pd\mu_t\bigg)^{\frac{1}{p}}\leq C.
\end{eqnarray}
\end{lem}

\begin{proof}For $t\geq t_0$, form (\ref{f-sigma-ineq}), we have
\begin{equation}\label{9-ineq}
\begin{split}
\frac{\partial}{\partial t}\int_{M_t}f_\sigma^pd\mu_t\leq& \
\int_{M_t}pf_\sigma^{p-1}\frac{\partial}{\partial t}f_\sigma
d\mu_t\\
\leq&\ -p(p-1)\int_{M_t}f_\sigma^{p-2}|\nabla
f_\sigma|^2d\mu_t\\
&\
+4(1-\sigma)p\int_{M_t}\frac{f_\sigma^{p-1}}{a|H|^2+\beta_\epsilon
c}|H||\nabla|H|||\nabla f_\sigma| d\mu_t\\
&\ -2p\epsilon_\nabla
\int_{M_t}\frac{f_{\sigma}^{p-1}}{(a|H|^2+\beta_\epsilon
c)^{1-\sigma}}|\nabla
H|^2d\mu_t\\
&\ +2\bar{b}p\int_{M_t}f_\sigma^{p}d\mu_t+\tilde{b}\sigma p
\int_{M_t}|A|^2f_\sigma^pd\mu_t.
\end{split}
\end{equation}
As in (\ref{4-ineq}), we have
\begin{equation}\label{10-ineq}
\begin{split}
&\ 4(1-\sigma)p\int_{M_t}\frac{f_\sigma^{p-1}}{a|H|^2+\beta_\epsilon
c}|H||\nabla|H|||\nabla f_\sigma| d\mu_t\\
\leq&\ \frac{2p}{b\mu}\int_{M_t}f_\sigma^{p-2}|\nabla
f_\sigma|^2d\mu_t+2p\mu\int_{M_t}\frac{f_\sigma^{p-1}}{(a|H|^2+\beta_\epsilon
c)^{1-\sigma}}|\nabla H|^2d\mu_t.
\end{split}
\end{equation}
Substituting (\ref{10-ineq}) to (\ref{9-ineq}),  letting
$\mu=\frac{4}{b(p-1)}$ and
$p\geq\max\{2,\frac{8}{b\epsilon_\nabla}+1\}$  we obtain
\begin{equation*}
\begin{split}
\frac{\partial}{\partial t}\int_{M_t}f_\sigma^pd\mu_t \leq&\
-\frac{p(p-1)}{2}\int_{M_t}f_\sigma^{p-2}|\nabla
f_\sigma|^2d\mu_t\\
&\ -p\epsilon_\nabla
\int_{M_t}\frac{f_{\sigma}^{p-1}}{(a|H|^2+\beta_\epsilon
c)^{1-\sigma}}|\nabla
H|^2d\mu_t\\
&\ +2\bar{b}p\int_{M_t}f_\sigma^{p}d\mu_t+\frac{\tilde{b}\sigma
\alpha_\epsilon p}{b} \int_{M_t}f_\sigma^p(a|H|^2+\beta_\epsilon
c)d\mu_t.
\end{split}
\end{equation*}
This together with (\ref{integral-ineq}) implies
\begin{equation}\label{11-ineq}
\begin{split}
\frac{\partial}{\partial t}\int_{M_t}f_\sigma^pd\mu_t \leq&\
-p(p-1)\bigg(\frac{1}{2}-\frac{2\tilde{b}\sigma
\alpha_\epsilon}{b^2\eta\varepsilon
}\bigg)\int_{M_t}f_\sigma^{p-2}|\nabla
f_\sigma|^2d\mu_t\\
&\ -\bigg(p\epsilon_\nabla-\frac{(2p\eta+5){\tilde{b}\sigma
\alpha_\epsilon p}}{b^2\varepsilon}\bigg)
\int_{M_t}\frac{f_{\sigma}^{p-1}}{(a|H|^2+\beta_\epsilon
c)^{1-\sigma}}|\nabla
H|^2d\mu_t\\
&\ +2\bar{b}p\int_{M_t}f_\sigma^{p}d\mu_t.
\end{split}
\end{equation}
Now we pick
$\eta=\frac{4\tilde{b}\alpha_\epsilon\sigma}{b^2\varepsilon}$ and
let $\sigma\leq
\min\Big\{\frac{b^2\varepsilon\epsilon_\nabla}{10\tilde{b}\alpha_\epsilon}
,\frac{b^2\varepsilon\sqrt{\epsilon_\nabla}}{4\tilde{b}\alpha_\epsilon\sqrt{p}},\frac{1}{2}\Big\}$.
Then (\ref{11-ineq}) reduces to
\begin{eqnarray*}
\frac{\partial}{\partial t}\int_{M_t}f_\sigma^pd\mu_t
\leq2\bar{b}p\int_{M_t}f_\sigma^{p}d\mu_t.
\end{eqnarray*}
This implies
\begin{eqnarray}
\int_{M_t}f_\sigma^pd\mu_t\leq
e^{2\bar{b}pt}\int_{M_{t_0}}f_\sigma^{p}d\mu_t.
\end{eqnarray}

If $t\in [0,t_0]$, by the smoothness of the mean curvature flow we
see that $\int_{M_t}f_\sigma^pd\mu_t$ is bounded. For $t\geq t_0$,
we only have to show that $T_{\max}$ is finite.

\begin{lem}\label{finite-time}The maximal existence time $T_{\max}$ of the mean curvature flow is
finite.
\end{lem}
\begin{proof}Fixed a point $y\in \mathbb{F}^{n+d}(c)$ and let
$r$ be the distance function on $\mathbb{F}^{n+d}(c)$ from $y$.
Denote also by $r$ the composition $r\circ F_t$. We may assume that
$r>0$ on $M_t$ for $t\in [0,T_{\max})$. In fact, if $d=1$, we may
choose $y$ such that it is outside of a geodesic ball in
$\mathbb{F}^{n+1}(c)$ that encloses $M_0$. By the maximum principle
we see that $y$ doesn't lies in any $M_t$. If $d>1$, then the
Haussdorff dimension of $F(M\times [0,T_{\max}))$ is no more than
$n+1$. So we can also pick a point $y$ such that it doesn't lies in
any $M_t$. In both cases, we have $r>0$ on each  $M_t$.

From \cite{CGM}, we know that
\begin{eqnarray}\label{12}\triangle r=\langle
H,\partial_r\rangle+{\rm co_c}(r)(n-|\partial_r^T|^2).
\end{eqnarray}
Here ${\rm co_c}(r)=\frac{\sqrt{-c}\cosh
(\sqrt{-c}r)}{\sinh(\sqrt{-c}r)}=\frac{\sqrt{-c}(e^{\sqrt{-c}r}+e^{-\sqrt{-c}r})}{e^{\sqrt{-c}r}-e^{-\sqrt{-c}r}}$,
$\partial_r$ is the gradient of $r$ in $\mathbb{F}^{n+d}(c)$, and
$\partial_r^T$ is the tangent part of $\partial_r$ to $M_t$. Clearly
we have ${\rm co_c}(r)\geq \sqrt{-c}$ and $|\partial_r^T|^2\leq1$.

On the other hand, since $F_t$ satisfies (\ref{MCF}), we have
\begin{eqnarray}\label{13-0}\frac{\partial}{\partial t} r=\langle H,
\partial_r\rangle.
\end{eqnarray}
Combining (\ref{12}) and (\ref{13-0}) we obtain
\begin{eqnarray}\label{14}\frac{\partial}{\partial t} r= \triangle
r-{\rm co}_c(r)(n-|\partial_r^T|^2).
\end{eqnarray}
Suppose that $r(0)<R$. By the maximum principle  we see that
\begin{eqnarray}\label{15}r(t)< R-(n-1)\sqrt{-c}t.
\end{eqnarray}
Then $T_{\max} < \frac{R}{(n-1)\sqrt{-c}}$, i.e., the maximal
existence time of the mean curvature flow is finite.
\end{proof}

By Proposition \ref{finite-time}, we finish the proof of Lemma
\ref{lem1}.

\end{proof}

Now we can proceed as in \cite{H1} or \cite{HS} via a Stampacchia
iteration procedure to complete the proof of Theorem \ref{thm1}.

\end{proof}

\section{A gradient estimate for the mean curvature}

We establish a gradient estimate for the mean curvature flow, which
will be used to compare the mean curvature at different points of
the submanifold. We also assume that $c<0$.

\begin{thrm}\label{gradient-H}For every $\eta>0$, there exists a
constant $C_\eta$ independent of $t$ such that for all $t\in
[0,T_{\max})$, there holds
\begin{eqnarray}\label{16}|\nabla H|^2\leq \eta |H|^4+C_\eta.
\end{eqnarray}
\end{thrm}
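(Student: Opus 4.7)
The plan is to adapt the maximum-principle gradient estimate of Huisken \cite{H1} and Andrews--Baker \cite{Andrews-Baker} to the hyperbolic setting, using the pinching already established in Theorem \ref{thm1}. Since (\ref{nabla A}) gives $|\nabla H|^2\le \tfrac{n+2}{3}|\nabla A|^2$, it suffices to prove an estimate of the same form with $|\nabla A|^2$ in place of $|\nabla H|^2$.

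The first step is to record the standard Simons-type evolution inequality
\begin{equation*}
\Bigl(\frac{\partial}{\partial t}-\Delta\Bigr)|\nabla A|^2 \le -2|\nabla^2 A|^2+C_1|A|^2|\nabla A|^2+C_2\,c\,|\nabla A|^2
\end{equation*}
in the space form $\mathbb{F}^{n+d}(c)$, with positive constants $C_1,C_2$ depending only on $n$, together with the companion identity
\begin{equation*}
\Bigl(\frac{\partial}{\partial t}-\Delta\Bigr)|H|^4 = -4|H|^2|\nabla H|^2+4|H|^2R_2+4nc|H|^4-2\bigl|\nabla|H|^2\bigr|^2,
\end{equation*}
derived directly from (\ref{|H|}). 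Since $c<0$, the curvature term $C_2c|\nabla A|^2$ has the good sign and can be discarded, while the lower-order contribution $4nc\eta|H|^4$ will be dominated by the highly favourable $-4\eta|H|^2R_2\le -\tfrac{4\eta}{n}|H|^6$, where $R_2\ge\tfrac{1}{n}|H|^4$ follows from applying Cauchy--Schwarz to $\sum_\alpha H_\alpha h_{ij\alpha}$.

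The auxiliary function to test is
\begin{equation*}
G_{\eta,K}:=|\nabla A|^2-\eta|H|^4-K
\end{equation*}
for positive constants $\eta$ and $K$ to be fixed. Using the preserved pinching $|A|^2\le \alpha_\epsilon|H|^2+\beta_\epsilon c$ and the almost-umbilic bound $|\mathring A|^2\le C_0|H|^{2-\sigma_0}$ from Theorem \ref{thm1}, the dangerous coupling $C_1|A|^2|\nabla A|^2$ can be rewritten as $\tfrac{C_1}{n}|H|^2|\nabla A|^2+C_1C_0|H|^{2-\sigma_0}|\nabla A|^2$. Together with the bad term $12\eta|H|^2|\nabla H|^2$ produced by $-\eta(\tfrac{\partial}{\partial t}-\Delta)|H|^4$ after Kato's inequality, this has to be absorbed into $-2|\nabla^2 A|^2$ and $-\tfrac{4\eta}{n}|H|^6$ via Young's inequality. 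Choosing $\eta$ small and then $K$ large depending on $\eta$ renders $(\tfrac{\partial}{\partial t}-\Delta)G_{\eta,K}\le 0$ wherever $G_{\eta,K}\ge 0$ and $|H|$ exceeds a threshold that depends only on $\eta$, $c$, and the constants above; points with $|H|$ bounded are handled by enlarging $K$ beyond a finite constant determined by the initial data and this threshold.

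Applying the parabolic maximum principle on $[0,T_{\max})$ then yields $G_{\eta,K}\le \max\{0,\sup_{M_0}G_{\eta,K}\}$, and (\ref{nabla A}) converts this into the desired bound (\ref{16}) after relabelling $\eta$. The main obstacle will be the careful bookkeeping of the polynomial terms appearing in $(\tfrac{\partial}{\partial t}-\Delta)G_{\eta,K}$, in particular matching the coefficient $C_1$ against the gradient contributions from $(\tfrac{\partial}{\partial t}-\Delta)|H|^4$ so that the pinching of Theorem \ref{thm1} produces a genuinely negative differential inequality for $G_{\eta,K}$ at large $|H|$. The good-sign curvature contributions arising from $c<0$ in both evolutions are a convenience, but they do not alter the core algebraic balance, which is parallel to the Euclidean argument of Andrews--Baker.
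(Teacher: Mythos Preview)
Your test function $G_{\eta,K}=|\nabla A|^2-\eta|H|^4-K$ is missing the auxiliary piece $(N_1+N_2|H|^2)|\mathring{A}|^2$ that both the paper and the Andrews--Baker argument rely on, and without it the absorption you describe cannot be carried out. The bad reaction term $C_1|A|^2|\nabla A|^2$ in the evolution of $|\nabla A|^2$ contains, after splitting $|A|^2=\tfrac{1}{n}|H|^2+|\mathring A|^2$, a piece $\tfrac{C_1}{n}|H|^2|\nabla A|^2$ with a \emph{fixed} coefficient $C_1=C_1(n)$ independent of $\eta$. There is no pointwise Young inequality that controls $|H|^2|\nabla A|^2$ by a combination of $|\nabla^2 A|^2$ and $|H|^6$: the Hessian term carries no factor of $|H|$ that can pair with $|\nabla A|^2$, and $|H|^6$ carries no gradient factor. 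Taking $\eta$ small only weakens $-\tfrac{4\eta}{n}|H|^6$, and the constant $K$ does not enter the differential inequality at all, so neither choice helps.

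The mechanism in the paper (as in \cite{H1,Andrews-Baker}) is different: one adds $(N_1+N_2|H|^2)|\mathring{A}|^2$ to the test function and considers $f=|\nabla H|^2+(N_1+N_2|H|^2)|\mathring{A}|^2-\eta|H|^4$. The evolution of $|\mathring{A}|^2$ contains $-2\bigl(|\nabla A|^2-\tfrac{1}{n}|\nabla H|^2\bigr)\le -\tfrac{4(n-1)}{3n}|\nabla A|^2$ by (\ref{nabla A}), so after multiplication by $N_2|H|^2$ one manufactures a term $-\tfrac{4(n-1)}{3n}N_2|H|^2|\nabla A|^2$ with $N_2$ at one's disposal; this is precisely what absorbs $C_1|H|^2|\nabla A|^2$ and the $12\eta|H|^2|\nabla H|^2$ arising from $-\eta|H|^4$. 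The $N_1$ piece similarly absorbs the remaining $|\nabla A|^2$ terms. The price is new zero-order terms of size $(N_1+N_2|H|^2)|\mathring{A}|^2|A|^2$, and it is here---and only here---that Theorem \ref{thm1} enters, via $|\mathring{A}|^2\le C_0|H|^{2-\sigma_0}$ and Young's inequality, to dominate these by $\tfrac{4\eta}{n}|H|^6+C_\eta$. One is left with $(\partial_t-\Delta)f\le C_5$, and finiteness of $T_{\max}$ (Lemma \ref{finite-time}) gives the bound. Your sketch invokes Theorem \ref{thm1} in the wrong place: it is not used to tame $|A|^2|\nabla A|^2$ directly, but to control the zero-order cost of the $(N_1+N_2|H|^2)|\mathring{A}|^2$ correction.
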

\begin{proof} By direct computation, we have
\begin{eqnarray}\label{19}
\frac{\partial}{\partial t}|H|^4\geq\triangle |H|^4-12|H|^2|\nabla
H|^2+\frac{4}{n}|H|^6+4nc|H|^4,
\end{eqnarray}
\begin{eqnarray}\label{17}
\frac{\partial}{\partial t}|\nabla H|^2\leq \triangle |\nabla
H|^2+C_1|H|^2|\nabla A|^2+C_2|\nabla A|^2,
\end{eqnarray}
for constants $C_1$ and $C_2$ independent of $t$.

We also have the following estimate for sufficiently large positive
constants $N_1$ and $N_2$ independent of $t$.
\begin{equation}\label{18}
\begin{split}
\frac{\partial}{\partial t}\bigg((N_1+N_2|
H|^2)|\mathring{A}|^2\bigg)\leq&\ \triangle\bigg((N_1+N_2|
H|^2)|\mathring{A}|^2\bigg)-\frac{4(n-1)}{3n}(N_2-1)|H|^2|\nabla
A|^2\\
&\ -\frac{4(n-1)}{3n}(N_1-C(N_2))|\nabla
A|^2\\
&\ -C_2(N_1,N_2)|\mathring{A}|^2(|H|^4+1)-2ncN_1|\mathring{A}|^2.
\end{split}
\end{equation}
In (\ref{18}), $C(N_2)$ and $C(N_1, N_2)$ are constants depending on
$N_2$ and $N_1,\ N_2$ respectively. Consider the function $f=|\nabla
H|^2+(N_1+N_2| H|^2)|\mathring{A}|^2-\eta|H|^4$. From (\ref{19}),
(\ref{17}) and (\ref{18}), we have
\begin{equation}\label{20}
\begin{split}
\frac{\partial}{\partial t}f\leq&\ \triangle
f-\frac{4(n-1)}{3n}(N_2-1)|H|^2|\nabla A|^2-\frac{4(n-1)}{3n}
(N_1-C_3(N_2))|\nabla A|^2\\
&\ +C_4(N_1, N_2) |\mathring{A}|^2(|H|^4+1)+12\eta|H|^2|\nabla
H|^2-\frac{4\eta}{n}|H|^6-4nc\eta |H|^4.
\end{split}
\end{equation}
Here we have consumed  $C_1|H|^2|\nabla A|^2+C_2|\nabla A|^2$ by
firstly choosing sufficiently large $N_2$ and secondly choosing
sufficiently large $N_1$. Notice that $|\nabla H|^2\leq n|\nabla
A|^2$. We can choose larger $N_2$ and $N_1$ depending on $\eta$ to
consume $12\eta|H|^2|\nabla H|^2$ and make the second, third terms
of the right hand side of (\ref{20}) negative. Since
$|\mathring{A}|^2\leq C_0|H|^{2-\sigma_0}$ for $t\geq t_0$ and
$|\mathring{A}|^2$ is uniformly bounded for $t\in [0,t_0]$, using
Young's inequality we get
\begin{eqnarray*}
C_4(N_1, N_2) |\mathring{A}|^2(|H|^4+1)-4nc\eta |H|^4\leq
\frac{4\eta}{n}|H|^6+C_\eta.
\end{eqnarray*}
Here $C_\eta$ is a constant depending on $\eta$ and other quantities
but independent of $t$. Then we obtain
\begin{eqnarray*}
\frac{\partial}{\partial t}f\leq \triangle f+C_5.
\end{eqnarray*}
Notice that $T_{\max}$ is finite. Then the theorem follows from the
maximum principle and the definition of $f$.
\end{proof}

\section{Convergence of MCF  in a hyperbolic space}

\begin{thrm}\label{convergence-c<0}Let $F:M^{n}\rightarrow
\mathbb{F}^{n+d}(c)$ be a smooth closed submanifold, where $n\geq 2$
and $c<0$. Assume $F$ satisfies
\begin{eqnarray}
\label{pinch-cond-2} |A|^2\leq\begin{cases}
            \frac{4}{3n}|H|^2+\frac{n}{2}c, \ &n = 2, 3, \\
            \frac{1}{n-1}|H|^2+2c, \ &n \geq 4.
        \end{cases}
\end{eqnarray}
Then $F_t(M)$ converges to a round point in finite time.
\end{thrm}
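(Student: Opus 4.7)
The plan is to follow the now-standard strategy of Huisken \cite{H1} and Andrews--Baker, adapted to the hyperbolic setting: use the preserved pinching (Lemma \ref{preserved pinch}), the traceless pinching estimate (Theorem \ref{thm1}), and the gradient estimate (Theorem \ref{gradient-H}) to contract the flow to a single point, then rescale about that point to extract a smooth round sphere as the blow-up limit.

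First I would observe that since the maximal existence time $T_{\max}$ is finite by Lemma \ref{finite-time}, a standard continuation argument forces $\max_{M_t} |A|^2 \to \infty$ as $t \to T_{\max}$; combined with Lemma \ref{preserved pinch} this gives $\max_{M_t} |H|^2 \to \infty$. Next, by the usual Bernstein-type estimates (bootstrap on the evolution of $|\nabla^m A|^2$, as in \cite{H1}), the higher derivatives of $A$ are controlled by powers of $\max |A|$. Combining these with Theorem \ref{gradient-H}, one shows that on the region where $|H|$ is close to $\max |H|$, the mean curvature oscillates very little at the parabolic scale $|H|^{-1}$, so the submanifold looks increasingly round at that scale.

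The contraction-to-a-point step goes as follows. Using the distance function to an exterior point as in Lemma \ref{finite-time}, together with the pinching $|A|^2 \le \alpha_\epsilon |H|^2 + \beta_\epsilon c$ which uniformly bounds the extrinsic diameter by a multiple of $|H|_{\min}^{-1}$, one shows $\mathrm{diam}(F_t(M)) \to 0$, so $F_t(M)$ converges in Hausdorff distance to a single point $p_\infty \in \mathbb{F}^{n+d}(c)$. Working in normal coordinates centered at $p_\infty$, one defines rescaled immersions
\begin{equation*}
\tilde{F}(\cdot,s) \;=\; \psi(t)\bigl(F(\cdot,t) - p_\infty\bigr), \qquad s(t) = -\tfrac{1}{2n}\log(T_{\max} - t),
\end{equation*}
with $\psi(t) = (2n(T_{\max}-t))^{-1/2}$. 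The rescaled flow satisfies a mean curvature-type equation whose ambient-curvature correction is of order $\psi(t)^{-2} c$ and therefore tends to zero; in the limit the problem reduces to the Euclidean case. The pinching from Theorem \ref{thm1} rescales to $|\tilde{\mathring{A}}|^2 \le C_0 \psi(t)^{-\sigma_0} |\tilde H|^{2-\sigma_0} \to 0$, so any smooth subsequential limit is totally umbilical.

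The main obstacle is proving that the rescaled family $\tilde F(\cdot,s)$ converges smoothly, and not merely subsequentially in a weak sense, to a unique round sphere. For this one needs uniform two-sided bounds on $|\tilde A|$ and all $|\tilde\nabla^m \tilde A|$ on a time interval of uniform length going forward from each $s$, which requires the derivative estimates of the previous paragraph to survive the rescaling uniformly in $s$, and requires that the injectivity radius of the rescaled flow stay bounded below; the latter uses the traceless pinching again. Once smooth convergence of the rescaled flow to a unit round sphere (centered at the origin in the Euclidean blow-up) is established, the usual uniqueness-of-tangent-flow/ODE comparison argument (as in \cite[\S5]{Andrews-Baker}) upgrades subsequential to full convergence, and unscaling shows that $F_t(M)$ converges to the round point $p_\infty$ in finite time, completing the proof of Theorem \ref{convergence-c<0}.
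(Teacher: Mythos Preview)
Your outline is correct and follows essentially the same Huisken/Andrews--Baker strategy the paper sketches: finite-time blow-up of $|A|$, the gradient estimate to force $|H|_{\max}/|H|_{\min}\to 1$, contraction to a point, and a rescaling whose ambient-curvature correction vanishes so that the limit is totally umbilical. The one ingredient the paper makes explicit that you only assert is the diameter bound: the paper invokes Chen's pointwise estimate to get $K_{\min}(x)\geq \varepsilon_0|H|^2(x)$ from the preserved $\alpha_\epsilon,\beta_\epsilon$-pinching, and it is this sectional-curvature lower bound (via Bonnet--Myers) that yields $\mathrm{diam}(M_t)\leq C|H|_{\min}^{-1}$, not the pinching inequality on $|A|^2$ directly.
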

\begin{proof}
By the curvature estimate in \cite{Chen}, we see that
\begin{eqnarray*}
K_{\min}(x)\geq\frac{1}{2}\bigg(\frac{1}{n-1}-\alpha_\epsilon\bigg)|H|^2(x)+\frac{1}{2}(2-\beta_\epsilon)c.
\end{eqnarray*}
By our choices of $\alpha_\epsilon$ and $\beta_\epsilon$, and the
preserved pinching condition, we see that there exists a positive
constant $\varepsilon_0$  independent of $t$ such that
\begin{eqnarray}\label{21}
K_{\min}(x)\geq\varepsilon_0|H|^2.
\end{eqnarray}

Since $T_{\max}$ is finite, $\max_{M_t}|A|^2\rightarrow \infty$ as
$t\rightarrow T_{\max}$. By similar arguments as in
\cite{Andrews-Baker,H1,H2} we have
$\frac{\max_{M_t}|H|}{\min_{M_t}|H|}\rightarrow 1$ as $t\rightarrow
T_{\max}$, and $M_t$'s converge to a single point $o$ as
$t\rightarrow T_{\max}$. If we take a rescaling around $o$ (since
$\mathbb{F}^{n+d}(c)$ can be consider as a linear space that
isomorphic to $\mathbb{R}^{n+d}$) such that the total area of the
expanded submanifolds are fixed, then the rescaled immersions
converge to a totally umbilical immersion as $t\rightarrow
T_{\max}$.
\end{proof}

When $p=1$ and $n=3$, we have the following proposition.

\begin{prop}Let $F:M^{3}\rightarrow \mathbb{F}^{4}(c)$ be a smooth closed
hypersurface in a hyperbolic space with constant curvature $c<0$.
Assume $F$ satisfies
\begin{eqnarray*}
|A|^2\leq\frac{1}{2}|H|^2+2c.
\end{eqnarray*}
Then the mean curvature flow with $F$ as initial value converges to
a round point in finite time.
\end{prop}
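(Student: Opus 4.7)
The plan is to transcribe the four-step program of Theorem \ref{convergence-c<0} to the codimension-one setting, where many expressions simplify: with $d = 1$ we have $|R^{\perp}|^{2} = 0$, $R_{1} = |A|^{4}$, $R_{2} = |H|^{2}|A|^{2}$, and the ``off-diagonal'' part $\mathring{A}_{I}$ vanishes so that $\mathring{A} = \mathring{A}_{H}$. This collapses virtually every algebraic term in Sections 3--6 into a single-normal-vector version of itself.

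The first step is to show that the pinching $Q := |A|^{2} - \tfrac{1}{2}|H|^{2} - 2c \leq 0$ is preserved along the flow. Applying the evolution equation (\ref{Q}) with $n = 3$, $\alpha = \tfrac{1}{2}$, $\beta = 2$ and substituting the codimension-one forms of $R_{1}, R_{2}$ on the set $\{Q = 0\}$, the reaction part $2R_{1} - 2\alpha R_{2} - 2nc|\mathring{A}|^{2} - 2n(\alpha - \tfrac{1}{n})c|H|^{2}$ becomes $4c|A|^{2} - 6c|\mathring{A}|^{2} - c|H|^{2}$. Using $|A|^{2} = |\mathring{A}|^{2} + \tfrac{1}{3}|H|^{2}$ this reduces to $-2c|\mathring{A}|^{2} + \tfrac{c}{3}|H|^{2}$, and substituting $|H|^{2} = 6|\mathring{A}|^{2} - 12c$ (also from $Q = 0$) collapses it to $-4c^{2} \leq 0$. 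The gradient term $-2(|\nabla A|^{2} - \tfrac{1}{2}|\nabla H|^{2})$ is nonpositive by Kato's inequality (\ref{nabla A}), since $|\nabla A|^{2} \geq \tfrac{3}{5}|\nabla H|^{2} > \tfrac{1}{2}|\nabla H|^{2}$. The maximum principle then preserves $Q \leq 0$, and the equality case is disposed of exactly as in the discussion following Lemma \ref{preserved pinch}, so we may assume strict pinching $|A|^{2} \leq \alpha_{\epsilon}|H|^{2} + \beta_{\epsilon}c$ for some $\epsilon \in (0,1)$ after a short time.

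With preservation in hand, the remaining three steps transcribe almost verbatim. For the $\mathring{A}$-pinching estimate (Theorem \ref{thm1}), define codimension-one analogues of $\alpha_{\epsilon}, \beta_{\epsilon}, a, b$ matched to the constants $(\tfrac{1}{n-1}, 2)$ at $n = 3$ and rerun the proof; the Simons-type reaction inequality $Z + nc|\mathring{A}|^{2} \geq \varepsilon|\mathring{A}|^{2}(a|H|^{2} + \beta_{\epsilon}c)$ is in fact \emph{easier} here because $|\mathring{A}_{I}|^{2}$ contributes nothing, and the numerical test $-\tfrac{\beta_{\epsilon}}{\alpha_{\epsilon} - 1/n}\bigl(\tfrac{1}{n} - \tfrac{n-2}{2n(n-1)}\bigr) + n \leq 0$ used in Proposition 4.2 holds by design. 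The gradient estimate of Theorem \ref{gradient-H} and the finite-time barrier Lemma \ref{finite-time} apply without modification, and Chen's lower bound $K_{\min} \geq \varepsilon_{0}|H|^{2}$ plus the Huisken-type rescaling argument at the final singular point give convergence to a round totally umbilical sphere.

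The main obstacle is the preservation step itself: the reaction term is not manifestly signed after the first substitution, since $-2c|\mathring{A}|^{2} > 0$ competes against $\tfrac{c}{3}|H|^{2} < 0$. The trick is that the constraint $Q = 0$ must be used a \emph{second} time to trade $|H|^{2}$ for $|\mathring{A}|^{2}$, whereupon the two indefinite contributions cancel exactly and leave the clean negative remainder $-4c^{2}$. This hidden cancellation is what pins down the specific constants $\tfrac{1}{2}$ and $2$ at $n = 3$ and would fail for generic nearby coefficients, so the analysis of this borderline case is delicate even though the subsequent sections are essentially mechanical.
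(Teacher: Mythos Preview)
Your proposal is correct and follows the same four-step template as the paper: codimension-one simplifications ($\mathring{A}_I=0$), preservation via the maximum principle, then Theorems \ref{thm1} and \ref{gradient-H} with adjusted constants, and finally the convergence argument of Theorem \ref{convergence-c<0}.

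There is one stylistic difference worth flagging. For the preservation step you carry out an explicit computation at the specific values $n=3$, $\alpha=\tfrac12$, $\beta=2$ and arrive at the remainder $-4c^2$, whereas the paper simply invokes the general estimate (\ref{1}) with $\mathring{A}_I=0$ and observes that the right-hand side is nonpositive for the whole range $\alpha\leq\tfrac{1}{n-1}$, $\beta\geq 2$, for every $n\geq 2$. Both routes give the same conclusion, but the paper's version shows that your final paragraph overstates the delicacy: the constants $\tfrac12$ and $2$ are not ``pinned down'' by a hidden cancellation that ``would fail for generic nearby coefficients''; rather they are the borderline of an entire admissible region, and the reaction term is nonpositive throughout that region. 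The only new numerical check the paper singles out is that with $a=\tfrac{1}{3(2+\epsilon)}$ one still has $\epsilon_\nabla=\tfrac{3}{n+2}-\tfrac{1}{n}-a>0$ at $n=3$, which you absorb into ``rerun the proof'' but should verify explicitly.
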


\begin{proof}
 If $d=1$, then $\mathring{A}_I=0$. If we take $\alpha\leq
\frac{1}{n-1}$ and $\beta\geq 2$ for all $n\geq 2$, the left hand
side of (\ref{1}) is nonpositive. Hence $|A|^2\leq
\frac{1}{n-1}|H|^2+2c$ is preserved along the mean curvature flow
for all $n\geq 2$. When $n=3$, if we set
$a=\frac{1}{3(2+\epsilon)}$, then
$\epsilon_\nabla=\frac{3}{4}-\frac{1}{3} -a>0$, and Theorem
\ref{thm1} also holds. Then Theorem \ref{gradient-H} follows. By a
similar argument as in the proof of Theorem \ref{convergence-c<0},
we get the convergence of the mean curvature flow.

\end{proof}

\begin{remark} When $d=1$ and $n=3$, the pinching condition
$|A|^2\leq \frac{1}{2}|H|^2+2c$ is better than condition
(\ref{pinch-cond-2}). In fact, we have
$\frac{1}{2}|H|^2+2c-\frac{4}{9}|H|^2-\frac{3}{2}c=\frac{1}{18}|H|^2+\frac{1}{2}c>0$.
Here we have used the fact that $|H|^2+9c>0$, which is implied by
$|A|^2\leq \frac{1}{2}|H|^2+2c$.
\end{remark}

\end{document}